\documentclass{amsart}

\usepackage{amssymb,amsmath,amsthm,latexsym,enumerate}
  \theoremstyle{definition}
  \newtheorem{definition}{Definition}[section]
  \newtheorem{notation}[definition]{Notation}

  \newtheorem{remark}[definition]{Remark}

  \theoremstyle{plain}
  \newtheorem{lemma}[definition]{Lemma}
  
  \newtheorem{theorem}[definition]{Theorem}
  \newtheorem{corollary}[definition]{Corollary}

\newcommand{\complex}{\mathbb{C}}
  
\newcommand{\field}{\mathbb{F}}

\begin{document}

\title[]
{On the irreducible representations of the Jordan triple system of $p \times q$ matrices}

\author{Hader A. Elgendy}

\address{Department of Mathematics, Faculty of Science, Damietta
University, Damietta 34517, Egypt}

\email{haderelgendy42@hotmail.com}

\begin{abstract}
Let $\mathcal{J}_{\field}$ be the Jordan triple system of all $p \times q$ ($p\neq q$; $p,q >1)$ rectangular matrices over a field $\field$ of characteristic 0 with the triple product $\{x,y,z\}= x y^t z+ z y^t x $, where $y^t$ is the transpose of $y$.  We study the universal associative envelope  $\mathcal{U}(\mathcal{J}_{\field})$ of $\mathcal{J}_{\field}$ and show that $\mathcal{U}(\mathcal{J}_{\field}) \cong  M_{p+q \times p+q}(\field)$, where $M_{p+q\times p+q} (\field)$ is the ordinary associative algebra of all $(p+q) \times (p+q)$ matrices over $\field$.  It follows that there exist only one nontrivial irreducible representation of  $\mathcal{J}_{\field}$. The center of $\mathcal{U}(\mathcal{J}_{\field})$ is deduced.
 \end{abstract}
\subjclass[2020]{Primary 17C50. Secondary  17B35, 17C99, 16G60}
\keywords{Jordan triple systems, Rectangular  matrices, Universal associative envelope, Representation theory}
\date{\textit{\today}}
\maketitle
\section{Introduction}
A vector space $V$ over a field $\field$ of characteristic 0 equipped with a triple product $\{a, b , c\}$ is called a \emph{Jordan triple system} if
\[\{x,y,z\}=\{z,y,x\},\]
\[\{u, v,\{x, y, z\}\} = \{\{u, v, x\}, y, z\} - \{x,\{v, u, y\}, z\} + \{x, y,\{u, v, z\}\},\]
for all $x, y, z, u, v \in V$.

Jordan structures appeared in many areas of mathematics like Lie Theory,  differential geometry  and  analysis  \cite{Koecher1,  2, Kaup, Ko1, Koecher, Upmeier}.  In addition to that Jordan triple systems have been used to find several solutions of the Yang-Baxter equation \cite{OK}. The linkages between Jordan structures, Lie algebras, and projective geometries are given in \cite{6}. Jordan structures play also an important role in theoretical physics. They are appeared in the theory of superstrings \cite{Corrigan9,Fairlie4, Foot8, Goddard5, Gursey7, Sierra6}, and in the theory of colour and confinement \cite{Gunaydm1}, in supersymmetry \cite{Gunaydin1}.
The description of some of these applications has been given in the survey \cite{37}. More information about Jordan triple systems can be found in
  \cite{Mb, Ne}.
It is well known that to every associative algebra $A$ one can relate a Jordan triple system $J$ with the triple product $\{x,y,z\} = xyz + zyx$. A Jordan triple system is \emph{special} if it can be imbedded as a subtriple of some $J$, otherwise it is \emph{exceptional}.  A \emph{representation} of a Jordan triple system $J$ is a Jordan triple homomorphism $\Theta: J \to (End V )_{-}$, where $End V $ is the space of endomorphisms of a vector space $V$ to itself.  
   A representation $\rho$ is called \emph{irreducible} if the only invariant subspaces of $V$ under $\rho$ are the trivial ones, $\{0\}$ and $V$.  It is known that any Jordan algebra gives rise to a Jordan triple system. One of the most important examples of a Jordan triple system which doesn't come from a bilinear product is the rectangular matrices $M_{p\times q}(\field)$ with the triple product $xy^t z+ zy^t x$; if $p\neq q$ there is no natural way to multiply two $p\times q$ matrices to get a third $p\times q$ matrix, see \cite{Mc}. This example shows the necessity of a ternary product. This example is a special Jordan triple system  (see the map $\Theta$ of Corollary \ref{xo} (of the present paper)).

The problem of the classification of the representations of a special Jordan triple system can be converted into a problem of an associative algebra by passage to the universal associative envelope of the Jordan triple system. In \cite{2}, it was shown that the universal (associative) envelope of any Jordan triple system of finite dimension is finite-dimensional.  In \cite{elgendy4}, we showed that the universal associative envelope of the Jordan triple system $J$ all $n$ by $n$ $(n\geq 2)$ matrices over a field $\field$ of characteristic 0 (with respect to the product xyz+zyx), then the universal (associative) envelope of $J$ is isomorphic to $M_{n\times n}(\field) \oplus M_{n\times n}(\field) \oplus M_{n\times n}(\field) \oplus M_{n\times n}(\field)$, where $M_{n\times n}(\field)$ is the ordinary associative algebra of all $n$ by $n$ matrices over $\field$. It follows that there are four nontrivial finite-dimensional irreducible representations of $J$. In \cite{elgendy5}, we have studied the representations of two special Jordan triple systems (with respect to the product $xyz+zyx$): The Jordan triple system $\mathcal{J_S}$ of all symmetric $n$ by $n$ $(n\geq 2)$ matrices over a field $\field$ of characteristic zero, and the Jordan triple system $\mathcal{J_H}$ of all Hermitian $n$ by $n$ $(n\geq 2)$ matrices over the complex numbers $\complex$. We proved that the universal (associative) envelope of $\mathcal{J_S}$ is isomorphic to $M_{n\times n}(\field) \oplus M_{n\times n}(\field)$, while the universal (associative) envelope of $\mathcal{J_H}$ is isomorphic to $M_{n\times n}(\complex) \oplus M_{n\times n}(\complex)\oplus M_{n\times n}(\complex) \oplus M_{n\times n}(\complex)$. We deduced that the Jordan triple system $\mathcal{J_S}$ has two nontrivial finite-dimensional inequivalent irreducible representations, while the Jordan triple system $\mathcal{J_H}$ has four nontrivial inequivalent finite-dimensional irreducible representations.

In the present paper, we study the universal associative envelope of the special Jordan triple system $\mathcal{J}_{\field}$ of all $p\times q$ $(p\neq q;\, p,q > 1)$ rectangular matrices with the triple product $xy^t z+ zy^t x$.
This paper is organized as follows. In Section \ref{u2}, we construct the universal (associative) envelope $\mathcal{U}(\mathcal{J}_{\field})$ of $\mathcal{J}_{\field}$ and derive some identities of $\mathcal{U}(\mathcal{J}_{\field})$. In Section \ref{u3}, we prove that
 $\mathcal{U}(\mathcal{J}_{\field})$ is isomorphic to $M_{p + q\times p + q}(\field)$, where $M_{p + q\times p + q}(\field)$ is the ordinary associative algebra of all $(p + q)$ by $(p + q)$ matrices over $\field$ (Theorem \ref{S}). We also deduce that $\mathcal{J}_{\field}$ has only one nontrivial irreducible representation and determine the explicit form of this representation (Corollary \ref{xo}). The center of $\mathcal{U}(\mathcal{J}_{\field})$ is also determined (Lemma \ref{center}).

\section{The universal associative envelope of the special Jordan triple system of rectangular matrices}\label{u2}
\begin{definition}\label{def}
Let $\mathcal{J}_{\field}$ be the Jordan triple system of the rectangular matrices $M_{p\times q}(\field)$ ($p,q>1;\,p\neq q$) over a field $\field$ of characteristic 0 with the triple product
\[\{x,y,z\}= xy^t z+ zy^t x,\]
where $y^t$ is the transpose of $y$.
\end{definition}
\begin{definition}
We let $\Omega_1 = \{1, \dots, p\}$, $\Omega_2 = \{1, \dots, q\}$, and $\Omega_3 = \{p+1, \dots, p+q\}$ be three finite index sets.
Let $\mathfrak{B} = \{E_{i,j}\mid  i\in \Omega_1;\, j \in \Omega_2\}$ be a basis of $\mathcal{J}_{\field}$, where $E_{i,j}$  denotes the $p$-by-$q$ matrix with a single 1, in the $i$th row and $j$th column, and zeros elsewhere.
\end{definition}
 \begin{notation}
 Throughout this paper, we use the following notations:
\begin{itemize}
   \item $\delta_{i,j}$ for the Kronecker delta, and $\widehat{\delta}_{i,j}= 1 - \delta_{i,j}$.
  \item $\Delta_{i,L} = 1\,\,  \text{if} \,\, i \in L,\,\, \text{and} \,\, 0 \,\,\text{otherwise}$.
  \end{itemize}
    \end{notation}
 Let $\mathbb{X} =\{G_{i,j}\mid i\in \Omega_1;\,  j\in \Omega_2\}$ be a set of symbols in bijection with $\mathfrak{B}$ and let $\Phi: \mathfrak{B} \to \mathbb{X}$ realize the bijection $(
 \Phi({E}_{i,j})= G_{i,j})$.  Let $\mathfrak{F}$ be the free associative algebra generated by $\mathbb{X}$. We extend $\Phi$ to a map  $\Phi:\mathcal{J}_{\field}\to \mathfrak{F}$ (by linearity).
Let $I$ be the two-sided ideal of $\mathfrak{F}$ generated by all the elements of the form:
 \[  G_{i,j} G_{k,\ell} G_{s,t} + G_{s,t} G_{k,\ell} G_{i,j}- \Phi(\{E_{i,j}, E_{k,\ell}, E_{s,t}\})\quad (i,k,s\in \Omega_1,\,\, j,\ell,t\in \Omega_2).\]
  The universal associative envelope $\mathcal{U}(\mathcal{J}_{\field})$ of $\mathcal{J}_{\field}$ is the quotient $\mathfrak{F}/I$. Let $\pi: \mathfrak{F} \to \mathcal{U}(\mathcal{J}_{\field})$ be the projection, then the map $\iota =  \pi \circ \Phi$ maps $\mathcal{J}_{\field}$ to $\mathcal{U}(\mathcal{J}_{\field})$.
\subsection{Identities of the universal associative envelope}\label{identity}
In this section we get identities of $\mathcal{U}(\mathcal{J}_{\field})$ that we use in the proof of the main results of the next section.
\begin{lemma}\label{R}
In $\mathcal{U}(\mathcal{J}_{\field})$, the following  identities hold:
\begin{enumerate}[$(I)$]
\item $G_{i,j} G_{i,j} G_{i,j}\equiv G_{i,j}\quad (i\in \Omega_1,\,\, j\in \Omega_2)$,  \label{f1}
\item $G_{i,j} G_{k,\ell} \equiv 0 \quad (i\neq k,\,\, j\neq \ell;\,\, i,k \in \Omega_1, \,\, j,\ell \in \Omega_2)$, \label{f2}
\item $G_{i,j}G_{i,\ell} \equiv G_{1,j}G_{1,\ell} \quad (j\neq \ell;\,\, i \in \Omega_1\setminus \{1\},\,\, j,\ell \in \Omega_2)$, \label{f3}
\item $G_{i,j} G_{k,j}\equiv  G_{i,1}G_{k,1} \quad (i\neq k;\,\, i,k \in \Omega_1, \,\, j \in \Omega_2\setminus\{1\})$,  \label{f4}
\item $ G_{1,j} G_{1,\ell} G_{1,s} \equiv 0\quad (j\neq \ell \neq s;\,\, j,\ell, s \in \Omega_2)$, \label{f5}
\item $G_{i,1} G_{k,1} G_{t,1} \equiv 0\quad (i\neq k \neq t;\,\, i,k,t \in \Omega_1)$, \label{f6}
\item $G_{1,1}G_{i,1} G_{i,1} \equiv G_{1,j}G_{1,j} G_{1,1} \quad (i \in \Omega_1\setminus\{1\}, \,\, j\in \Omega_2\setminus\{1\})$, \label{f7}
\item $G_{i,1} G_{i,1} G_{1,1} \equiv G_{1,1} G_{1,j} G_{1,j}\quad (i \in \Omega_1\setminus\{1\},\,\, j\in \Omega_2\setminus\{1\} )$.\label{f8}
 \end{enumerate}
\end{lemma}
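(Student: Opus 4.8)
\emph{Proof sketch.}
The plan is to derive all eight identities from the defining relations of $\mathcal{U}(\mathcal{J}_{\field})$, which I would first put in normalized form. A direct matrix computation gives
\[
\{E_{i,j},E_{k,\ell},E_{s,t}\}=\delta_{j,\ell}\delta_{k,s}\,E_{i,t}+\delta_{\ell,t}\delta_{i,k}\,E_{s,j},
\]
so that in $\mathcal{U}(\mathcal{J}_{\field})$ one has, for all $i,k,s\in\Omega_1$ and $j,\ell,t\in\Omega_2$,
\[
G_{i,j}G_{k,\ell}G_{s,t}+G_{s,t}G_{k,\ell}G_{i,j}\equiv\delta_{j,\ell}\delta_{k,s}\,G_{i,t}+\delta_{\ell,t}\delta_{i,k}\,G_{s,j};
\]
call this relation $(\ast)$. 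From $(\ast)$ I would record two specializations to be used throughout: putting $(s,t)=(i,j)$ and dividing by $2$ (valid since $\field$ has characteristic $0$) gives $G_{i,j}G_{k,\ell}G_{i,j}\equiv\delta_{j,\ell}\delta_{k,i}G_{i,j}$, and putting $(k,\ell)=(i,j)$ gives the square relation $G_{i,j}^{2}G_{s,t}+G_{s,t}G_{i,j}^{2}\equiv\delta_{i,s}G_{i,t}+\delta_{j,t}G_{s,j}$. Identity (I) is the special case $(k,\ell)=(s,t)=(i,j)$ of $(\ast)$.

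For (II), set $A=G_{i,j}$ and $B=G_{k,\ell}$ with $i\neq k$ and $j\neq\ell$. The first specialization gives $ABA\equiv0$ and the square relation gives $A^{2}B+BA^{2}\equiv0$; multiplying the latter on the left by $A$ and using $A^{3}\equiv A$ (identity (I)) together with $ABA\equiv0$ forces $AB\equiv0$.

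The crux is (III), with (IV) its analogue under interchanging rows and columns. To prove (III) (so $j\neq\ell$ and $i\in\Omega_1\setminus\{1\}$) I would specialize $(\ast)$ to the relation $G_{1,j}G_{i,j}G_{i,\ell}+G_{i,\ell}G_{i,j}G_{1,j}\equiv G_{1,\ell}$, multiply it on the left by $G_{1,j}$, and use $G_{1,j}G_{i,\ell}\equiv0$ from (II) to annihilate the second summand; this gives $G_{1,j}^{2}G_{i,j}G_{i,\ell}\equiv G_{1,j}G_{1,\ell}$. Separately, the square relation in the form $G_{1,j}^{2}G_{i,j}+G_{i,j}G_{1,j}^{2}\equiv G_{i,j}$, multiplied on the right by $G_{i,\ell}$, has middle term $G_{i,j}G_{1,j}^{2}G_{i,\ell}=G_{i,j}G_{1,j}(G_{1,j}G_{i,\ell})\equiv0$, again by (II), so $G_{1,j}^{2}G_{i,j}G_{i,\ell}\equiv G_{i,j}G_{i,\ell}$. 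Comparing the two identities yields (III); the same three steps, applied to $G_{i,j}G_{k,j}G_{k,1}+G_{k,1}G_{k,j}G_{i,j}\equiv G_{i,1}$ and multiplying on the right by $G_{k,1}$, yield (IV).

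With (II)--(IV) in hand the rest is short. For (V) I would choose $i\in\Omega_1\setminus\{1\}$ (possible since $p>1$), rewrite $G_{1,j}G_{1,\ell}G_{1,s}\equiv G_{i,j}G_{i,\ell}G_{1,s}$ using (III), and observe $G_{i,\ell}G_{1,s}\equiv0$ by (II) since $i\neq1$ and $\ell\neq s$; (VI) is the same argument with rows and columns interchanged (using $q>1$). For (VII) I would use the chain $G_{1,1}G_{i,1}G_{i,1}\equiv G_{1,j}G_{i,j}G_{i,1}\equiv G_{1,j}G_{1,j}G_{1,1}$, the first step rewriting $G_{1,1}G_{i,1}$ by (IV) and the second rewriting $G_{i,j}G_{i,1}$ by (III); (VIII) is the mirror chain $G_{i,1}G_{i,1}G_{1,1}\equiv G_{i,1}G_{i,j}G_{1,j}\equiv G_{1,1}G_{1,j}G_{1,j}$. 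I expect the only real difficulty to be (III) and (IV): the relations satisfied by the generators from a single row (or a single column) are too symmetric to distinguish the rows on their own, so one is forced to pass to a degree-three consequence of $(\ast)$, multiply by an extra generator, and exploit the quadratic vanishing in (II) to collapse the leftover terms; once that is done, the remaining identities are bookkeeping.
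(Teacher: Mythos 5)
Your proposal is correct and follows essentially the same route as the paper: normalize the defining relations, prove (I)--(II) via the cube and anticommutation identities, establish (III)--(IV) by multiplying a degree-three relation by an extra generator and using (II) to kill the cross terms, and then deduce (V)--(VIII) by substitution. The only differences are cosmetic (e.g.\ your proof of (VIII) chains (III) and (IV) directly, whereas the paper routes it through (VII) and an anticommutation relation), and all your intermediate identities check out.
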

\begin{proof}
For \eqref{f1}: It is obvious, since $2G_{i,j}G_{i,j}G_{i,j} \equiv 2G_{i,j} $  ($i\in \Omega_1$;\, $j\in \Omega_2$). For \eqref{f2}: Let $i, k \in \Omega_1$, $j,\ell \in \Omega_2$, $i\neq k$, and $j\neq \ell$. By \eqref{f1} (of the present lemma), we have  $G_{i,j} G_{i,j} G_{i,j}\equiv G_{i,j}$. Multiplying by $G_{k,\ell}$ from the right, we get
\begin{equation}\label{xy}
G_{i,j} G_{i,j} G_{i,j} G_{k,\ell} \equiv G_{i,j} G_{k,\ell}. 
\end{equation} 
Using $G_{i,j} G_{i,j} G_{k,\ell} \equiv -G_{k,\ell}  G_{i,j} G_{i,j} $ in \eqref{xy} gives
\begin{equation*}
-G_{i,j}  G_{k,\ell}  G_{i,j} G_{i,j} \equiv G_{i,j} G_{k,\ell},
\end{equation*} 
which implies  \eqref{f2}, since $G_{i,j}  G_{k,\ell}  G_{i,j}\equiv 0$.
 For \eqref{f3}: Let $i \in \Omega_1 \setminus\{1\}$, $j,\ell \in \Omega_2$, and $j\neq \ell$, we have
\begin{equation*}
G_{i,j} G_{1,j} G_{1,j} + G_{1,j} G_{1,j} G_{i,j}- G_{i,j}\equiv 0.
\end{equation*}
Multiplying by $G_{i,\ell}$ from the right and observing that $G_{1,j} G_{i,\ell} \equiv 0$ (by \eqref{f2} (of the present lemma)), we get \begin{equation}\label{q1}
 G_{1,j} G_{1,j} G_{i,j} G_{i,\ell}  - G_{i,j}G_{i,\ell}  \equiv 0.
\end{equation}
Using $G_{1,j} G_{i,j} G_{i,\ell} \equiv -G_{i,\ell} G_{i,j} G_{1,j} + G_{1,\ell}$ in \eqref{q1} gives
\begin{equation*}
 G_{1,j} \left(-G_{i,\ell} G_{i,j} G_{1,j} + G_{1,\ell}\right)  - G_{i,j}G_{i,\ell}  \equiv 0,
\end{equation*}
which implies \eqref{f3}; since $G_{1,j} G_{i,\ell} \equiv 0$ (by \eqref{f2} (of the present lemma)). For \eqref{f4}: Let $i,k \in \Omega_1$, $ j \in \Omega_2\setminus \{1\}$, and  $i\neq k$. We have
\begin{equation*}
G_{i,j} G_{i,1} G_{i,1} + G_{i,1} G_{i,1} G_{i,j} - G_{i,j}\equiv 0.
\end{equation*}
Multiplying by $G_{k,j}$ from the right and observing that $G_{i,1} G_{k,j} \equiv 0$ (by \eqref{f2} (of the present lemma)), we obtain
\begin{equation}\label{q2}
G_{i,1} G_{i,1} G_{i,j}G_{k,j} - G_{i,j} G_{k,j}\equiv 0.
\end{equation}
Using $G_{i,1} G_{i,j} G_{k,j} \equiv - G_{k,j} G_{i,j} G_{i,1} + G_{k,1}$ in \eqref{q2} gives
\begin{equation*}
G_{i,1} (- G_{k,j} G_{i,j} G_{i,1} + G_{k,1}) - G_{i,j} G_{k,j}\equiv 0,
\end{equation*}
which implies \eqref{f4}; since $G_{i,1} G_{k,j} \equiv 0$ (by \eqref{f2} (of the present lemma)).
For \eqref{f5}: Let $i\in \Omega_1 \setminus \{1\}$, $j, \ell, s \in \Omega_2$, and $j\neq \ell \neq s$.  By \eqref{f3} (of the present lemma), we have
\begin{align*}
G_{i,j} G_{i,\ell} \equiv G_{1,j} G_{1,\ell}.
\end{align*}
Multiplying by $G_{1,s}$ from the right, we get
 \begin{align*}
G_{i,j} G_{i,\ell}G_{1,s} \equiv G_{1,j} G_{1,\ell}G_{1,s},
\end{align*}
which implies \eqref{f5}; since $G_{i, \ell}G_{1,s}\equiv 0$ (by \eqref{f2} (of the present lemma)).
For \eqref{f6}:  Let $i,k,t \in \Omega_1$, $i \neq k\neq t$, and $j\in \Omega_2\setminus \{1\} $. By \eqref{f3} (of the present lemma), we have
\begin{align*}
G_{i,j} G_{k,j}  \equiv  G_{i,1} G_{k,1}.
\end{align*}
Multiplying by $G_{t,1}$ from the right gives
\begin{align*}
G_{i,j} G_{k,j} G_{t,1} \equiv  G_{i,1} G_{k,1} G_{t,1},
\end{align*}
which implies \eqref{f6}; since $G_{k,j}G_{t,1} \equiv 0$ (by \eqref{f2} (of the present lemma)).
For \eqref{f7}: Let $i\in \Omega_1\setminus \{1\}$ and $j \in \Omega_2\setminus \{1\}$.  By \eqref{f4} (of the present lemma), we have
\begin{align*}
G_{1,j} G_{i,j} \equiv G_{1,1} G_{i,1}.
\end{align*}
Multiplying by $G_{i,1}$ from the right and observing that $G_{i,j} G_{i,1}\equiv G_{1,j} G_{1,1}$ (by \eqref{f3} (of the present lemma)), we get  \eqref{f7}.
For \eqref{f8},  let $i \in \Omega_1\setminus \{1\}$, and $j\in \Omega_2\setminus \{1\}$,  we have
 \begin{align}\label{q3}
 G_{i,1} G_{i,1} G_{1,1}& \equiv - G_{1,1} G_{i,1} G_{i,1}+ G_{1,1}.
 \end{align}
 By \eqref{f7} (of the present lemma), we have $G_{1,1} G_{i,1} G_{i,1}\equiv  G_{1,j} G_{1, j} G_{1,1}$. Using this in \eqref{q3} gives
  \begin{align*}
 G_{i,1} G_{i,1} G_{1,1}&\equiv - G_{1,j} G_{1,j} G_{1,1}+ G_{1,1}
 \equiv G_{1,1} G_{1,j} G_{1,j}.
 \end{align*}
 This completes the proof.
\end{proof}
\begin{remark}\label{r}
By \eqref{f7} and \eqref{f8} of Lemma \ref{R}, for all $i,k \in \Omega_1\setminus\{1\}$, $i\neq k$, and $j\in \Omega_2\setminus\{1\}$,  we have
\[  G_{1,1}G_{i,1} G_{i,1} \equiv G_{1,j}G_{1,j} G_{1,1}\equiv G_{1,1}G_{k,1} G_{k,1},\]
    and
 \[  G_{i,1} G_{i,1} G_{1,1} \equiv G_{1,1} G_{1,j} G_{1,j}\equiv G_{k,1} G_{k,1} G_{1,1}.\]
  That is, the products  $G_{1,1}G_{i,1} G_{i,1}$ and  $G_{i,1} G_{i,1} G_{1,1}$ (resp. $G_{1,j}G_{1,j} G_{1,1}$ and $G_{1,1} G_{1,j} G_{1,j}$) do not depend on the choice of $i \in \Omega_1\setminus\{1\}$ (resp. $j\in \Omega_2\setminus\{1\}$).
\end{remark}
\begin{corollary}\label{R2}
In $\mathcal{U}(\mathcal{J}_{\field})$, the following  identities hold:
\begin{enumerate}[$(I)$]
\item $G_{1,\ell} G_{1,1} G_{1,1} G_{1,j} \equiv - \delta_{\ell, j} G_{1,1} G_{1,1} G_{1,j}G_{1,j} + G_{1,\ell} G_{1,j}\quad  (\ell, j \in \Omega_2 \setminus \{1\})$,\label{t1}
\item $G_{i,1}G_{1,1} G_{1,1} G_{k,1}\equiv - \delta_{i,k} G_{1,1} G_{1,1} G_{i,1} G_{i,1} + G_{i,1} G_{k,1} \quad  (i, k \in \Omega_1 \setminus \{1\})$,\label{t2}
\item $G_{1, j} G_{1,\ell} G_{1,\ell} G_{1,1} \equiv -\delta_{j,1} G_{1,1} G_{1,1} G_{1,\ell} G_{1,\ell} +  G_{1,j} G_{1,1}\quad  (j, \ell \in \Omega_2; \ell\neq 1) $,\label{t3}
\item $G_{1,1}G_{1,1} G_{1,j} G_{1,j} G_{1,1} \equiv G_{1,j} G_{1,j} G_{1, 1}\quad  (j \in \Omega_2 \setminus\{1\}) $,\label{t4}
 \item $G_{1, j} G_{1, j} G_{1,1} G_{1,1} G_{1,\ell} \equiv  G_{1,1} G_{1,1} G_{1,\ell}\quad (j,\ell \in \Omega_2\setminus\{1\})$,  \label{t77}
\item $G_{1,1}G_{1,1} G_{1,j} G_{1,j} G_{1,1}G_{i,1} \equiv G_{1, 1} G_{i,1} \quad  ( i \in \Omega_1 \setminus\{1\} ,\,\, j \in \Omega_2 \setminus \{1\})$, \label{t5}
\item $G_{1,1} G_{1,j} G_{1,j} G_{1,\ell} \equiv G_{1,1} G_{1,\ell}\quad (j,\, \ell \in \Omega_2 \setminus \{1\})$,\label{t6}
\item $G_{i,1}G_{1,1} G_{1,j} G_{1,j} \equiv G_{i,1} G_{1,1}\quad  ( i \in \Omega_1 \setminus\{1\} ,\,\, j \in \Omega_2 \setminus \{1\})  $.\label{t7}
\end{enumerate}
\end{corollary}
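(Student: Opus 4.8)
The plan is to first make the defining relations of $\mathcal{U}(\mathcal{J}_{\field})$ completely explicit, and then obtain every one of (I)--(VIII) by the same short "straightening'' routine. A direct computation with matrix units gives $\{E_{i,j},E_{k,\ell},E_{s,t}\}=\delta_{j,\ell}\delta_{k,s}E_{i,t}+\delta_{i,k}\delta_{\ell,t}E_{s,j}$, so that in $\mathcal{U}(\mathcal{J}_{\field})$, for all admissible indices,
\[ G_{i,j}G_{k,\ell}G_{s,t}+G_{s,t}G_{k,\ell}G_{i,j}\equiv \delta_{j,\ell}\delta_{k,s}G_{i,t}+\delta_{i,k}\delta_{\ell,t}G_{s,j}. \]
From this I will record the handful of cubic "straightening rules'' that do all the work: the choice $(i,j,k,\ell,s,t)=(1,1,1,1,1,j)$ gives $G_{1,1}G_{1,1}G_{1,j}+G_{1,j}G_{1,1}G_{1,1}\equiv G_{1,j}$; the choice $(1,\ell,1,j,1,1)$ gives $G_{1,\ell}G_{1,j}G_{1,1}+G_{1,1}G_{1,j}G_{1,\ell}\equiv \delta_{\ell,j}G_{1,1}$; the choice $(1,j,1,j,1,1)$ gives $G_{1,j}G_{1,j}G_{1,1}+G_{1,1}G_{1,j}G_{1,j}\equiv G_{1,1}$; and similarly for the row-analogues obtained by interchanging the first and second indices throughout. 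Each of these lets one move a repeated factor $G_{1,1}G_{1,1}$ (or $G_{1,j}G_{1,j}$, or $G_{i,1}G_{i,1}$) past a neighbouring generator at the cost of a Kronecker-delta correction of strictly smaller degree.

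With these in hand, each identity is proved by the same three moves: (a) take one straightening rule or one item of Lemma \ref{R} and multiply it on one side by a single generator so as to reach the required degree-four or degree-five monomial; (b) apply the straightening rules repeatedly to push the repeated factor to one end, carrying along the $\delta$-corrections; (c) observe that the $\delta$-corrections cancel in pairs, and eliminate or merge any surviving monomial using Lemma \ref{R}, chiefly $G_{1,j}G_{1,j}G_{1,j}\equiv G_{1,j}$ from \eqref{f1}, the vanishing $G_{i,j}G_{k,\ell}\equiv 0$ from \eqref{f2}, and the cubic vanishings $G_{1,j}G_{1,\ell}G_{1,s}\equiv 0$, $G_{i,1}G_{k,1}G_{t,1}\equiv 0$ from \eqref{f5}, \eqref{f6}. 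For instance, for (I): left-multiply $G_{1,1}G_{1,1}G_{1,j}+G_{1,j}G_{1,1}G_{1,1}\equiv G_{1,j}$ by $G_{1,\ell}$, then straighten $G_{1,\ell}G_{1,j}G_{1,1}$ and $G_{1,j}G_{1,\ell}G_{1,1}$ by the second rule above (the two $\delta_{\ell,j}G_{1,1}G_{1,1}$ terms cancel), arriving at $G_{1,\ell}G_{1,1}G_{1,1}G_{1,j}+G_{1,1}G_{1,1}G_{1,\ell}G_{1,j}\equiv G_{1,\ell}G_{1,j}$; if $\ell=j$ the second summand is already $G_{1,1}G_{1,1}G_{1,j}G_{1,j}$, while if $\ell\neq j$ it vanishes since $G_{1,1}G_{1,\ell}G_{1,j}\equiv 0$ by \eqref{f5} --- either way this is exactly (I). Identities (III), (IV), (VII), (VIII) come out the same way, directly from the straightening rules together with \eqref{f1}, \eqref{f2}, \eqref{f5}, \eqref{f6}; then (V) is deduced from (VII), and (VI) from (IV) together with $G_{i,1}G_{1,j}\equiv 0$ (\eqref{f2}) and one straightening step.

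For (II) I will note that no new work is needed: the defining ideal $I$, and hence the whole set of relations of Lemma \ref{R}, is invariant under the relabelling $G_{i,j}\leftrightarrow G_{j,i}$ together with $p\leftrightarrow q$ (indeed $\{E_{j,i},E_{\ell,k},E_{t,s}\}$ is obtained from $\{E_{i,j},E_{k,\ell},E_{s,t}\}$ by exactly this relabelling), so (II) is literally the image of (I) under this symmetry; the same symmetry yields the row-versions of (III), (V), (VII) if they are ever needed. The only real difficulty is bookkeeping: one must track precisely which $\delta$-corrections are produced by each application of a straightening rule and check that they cancel, and one must split into the cases of a repeated versus a distinct index ($\ell=j$ versus $\ell\neq j$, and $i=k$ versus $i\neq k$) in order to invoke the correct vanishing from Lemma \ref{R}. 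Beyond this there is no conceptual obstacle; once the explicit cubic relations are written down, every step is forced.
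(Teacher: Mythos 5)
Your proposal is correct and takes essentially the same approach as the paper: each identity is obtained by writing out an explicit defining cubic relation $G_{x}G_{y}G_{z}+G_{z}G_{y}G_{x}\equiv\Phi(\{x,y,z\})$, multiplying it by a single generator, and simplifying with the identities of Lemma \ref{R}. The only (harmless) deviations are that you derive \eqref{t2} from \eqref{t1} via the transpose symmetry $G_{i,j}\leftrightarrow G_{j,i}$, $p\leftrightarrow q$ where the paper simply calls the proof ``similar,'' and that in a few places you work directly from the raw cubic relations where the paper invokes \eqref{f7}, \eqref{f8} of Lemma \ref{R} and Remark \ref{r}.
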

\begin{proof}
For $\eqref{t1}$: Let $\ell, j \in \Omega_2 \setminus \{1\}$, we have
\begin{align*}
G_{1,\ell} G_{1,1} G_{1,1} G_{1,j} &\equiv \left(- G_{1,1} G_{1,1} G_{1,\ell} + G_{1,\ell}\right)G_{1,j}
\\
&\equiv  - \delta_{\ell, j}G_{1,1} G_{1,1} G_{1,j} G_{1,j} +  G_{1,\ell}G_{1,j},
 \end{align*}
using $G_{1,\ell} G_{1,1} G_{1,1} \equiv - G_{1,1} G_{1,1} G_{1,\ell} + G_{1,\ell}$ and \eqref{f5} of Lemma \ref{R}. The proof of \eqref{t2} is similar. For \eqref{t3}: Let $\ell, j \in \Omega_2$ and $\ell\neq 1$, we have
\begin{align*}
G_{1, j} G_{1,\ell} G_{1,\ell} G_{1,1} &\equiv  G_{1,j}( -G_{1,1}G_{1,\ell} G_{1,\ell}+ G_{1,1})
\\
&\equiv  -\delta_{j,1} G_{11} G_{1,1} G_{1,\ell} G_{1,\ell}+  G_{1,j} G_{1,1},
\end{align*}
using $G_{1,\ell} G_{1,\ell} G_{1,1} \equiv  -G_{1,1}G_{1,\ell} G_{1,\ell}+ G_{1,1}$ and \eqref{f5} of Lemma \ref{R}.
For \eqref{t4}: Let $j\in \Omega_2\setminus \{1\}$ and choose any $t \in \Omega_1\setminus \{1\}$, we have
\begin{align*}
G_{1,1}G_{1,1} G_{1,j} G_{1,j} G_{1,1}& \equiv G_{1,1} G_{1,1}G_{1,1}  G_{t,1} G_{t, 1}
\equiv  G_{1, 1} G_{t,1}G_{t,1} \equiv G_{1,j} G_{1,j} G_{1,1},
\end{align*}
by \eqref{f7} and \eqref{f1} of Lemma \ref{R}. For \eqref{t77}: Let $j,\ell \in \Omega_2\setminus\{1\}$ and choose any $t\in \Omega_1\setminus\{1\}$, we have
\begin{align*}
 G_{1, j} G_{1, j} G_{1,1} G_{1,1} G_{1,\ell} \!\equiv\!  G_{1,1} G_{t, 1} G_{t, 1} G_{1,1} G_{1,\ell}
 \!\equiv\! G_{1,1} G_{1,1} G_{1,\ell} G_{1,\ell}G_{1,\ell}
\!\equiv \! G_{1,1} G_{1,1} G_{1,\ell},
\end{align*}
by \eqref{f7}, \eqref{f8}, and \eqref{f1} of Lemma \ref{R}. For \eqref{t5}:  Let $i \in \Omega_1 \setminus \{1\}$ and $j \in \Omega_2 \setminus \{1\}$, we have
\begin{align*}
G_{1,1}G_{1,1} G_{1,j} G_{1,j} G_{1,1}G_{i,1}  &\equiv G_{1,j} G_{1,j} G_{1,1} G_{i,1}
\equiv G_{1,1} G_{i,1} G_{i,1} G_{i,1}\equiv G_{1,1} G_{i,1},
\end{align*}
by \eqref{t4} (of the present lemma) and \eqref{f7}, \eqref{f1} of Lemma \ref{R}. For \eqref{t6}: Let $\ell,\, j \in \Omega_2 \setminus \{1\}$, we have
\begin{align*}
G_{1,1} G_{1,j} G_{1,j} G_{1,\ell} &\equiv G_{1,1} G_{1,\ell} G_{1,\ell} G_{1,\ell}
 \equiv  G_{1,1} G_{1,\ell},
\end{align*}
by Remark \ref{r} and \eqref{f1} of Lemma \ref{R}. For \eqref{t7}: Let $i \in \Omega_1 \setminus \{1\}$ and $j \in \Omega_2 \setminus \{1\}$, we have
\begin{align*}
G_{i,1}G_{1,1} G_{1,j} G_{1,j} \equiv G_{i,1} G_{i,1} G_{i,1} G_{1,1}\equiv G_{i,1} G_{1,1},
\end{align*}
by \eqref{f8} and \eqref{f1} of Lemma \ref{R}.  This completes the proof.
\end{proof}

\section{Main Results}\label{u3}
In this  section  we  present  the  main  results  of  this paper  on  the representations of the Jordan triple system $\mathcal{J}_{\field}$.
Our goal is to use the specialty of the Jordan triple system $\mathcal{J}_{\field}$  and the identities of Section \ref{identity} to get the decomposition  of $\mathcal{U}(\mathcal{J}_{\field})$ into matrix algebras.

\begin{theorem}\label{S}
With notation as above. If $\mathcal{J}_{\field}$ is the Jordan triple system of all $p\times q$ $(p\neq q;\, p, q> 1)$ rectangular matrices over a field $\field$ of characteristic 0 and $\mathcal{U}(\mathcal{J}_{\field})$ is the universal associative envelope of $\mathcal{J}_{\field}$, then
\[ \mathcal{U}(\mathcal{J}_{\field}) \cong  M_{p+q \times p+q}(\field),\]
where $M_{p+q \times p+q}(\field)$ is the ordinary associative algebra of all $(p+q) \times (p+q)$ matrices over $\field$.
\end{theorem}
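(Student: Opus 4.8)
The plan is to produce an explicit surjective algebra homomorphism $\overline{\Theta}\colon \mathcal{U}(\mathcal{J}_{\field})\to M_{p+q \times p+q}(\field)$ and then prove it is injective by showing $\dim_{\field}\mathcal{U}(\mathcal{J}_{\field})\le (p+q)^{2}$. For the homomorphism, define $\Theta\colon \mathcal{J}_{\field}\to M_{p+q \times p+q}(\field)$ by sending a $p\times q$ matrix $x$ to the symmetric block matrix $\begin{pmatrix} 0 & x \\ x^{t} & 0\end{pmatrix}$ (upper block of size $p$, lower block of size $q$). A one–line block computation gives $\Theta(x)\Theta(y)\Theta(z)=\begin{pmatrix} 0 & xy^{t}z \\ x^{t}yz^{t} & 0\end{pmatrix}$, hence $\Theta(x)\Theta(y)\Theta(z)+\Theta(z)\Theta(y)\Theta(x)=\Theta(xy^{t}z+zy^{t}x)=\Theta(\{x,y,z\})$, so $\Theta$ is a homomorphism of Jordan triple systems into $M_{p+q \times p+q}(\field)^{(-)}$. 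The universal property of $\mathcal{U}(\mathcal{J}_{\field})$ then yields an algebra homomorphism $\overline{\Theta}$ with $\overline{\Theta}\circ\iota=\Theta$; explicitly $\overline{\Theta}(G_{i,j})=E_{i,p+j}+E_{p+j,i}$.

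For surjectivity, compute $\overline{\Theta}(G_{i,j})\overline{\Theta}(G_{k,\ell})=\delta_{j,\ell}E_{i,k}+\delta_{i,k}E_{p+j,p+\ell}$. Taking $j=\ell$, $i\neq k$ shows that every off–diagonal matrix unit $E_{i,k}$ of the top–left $p\times p$ block lies in the image (this uses $p>1$), and symmetrically every off–diagonal $E_{p+j,p+\ell}$ of the bottom–right $q\times q$ block (using $q>1$). Multiplying these on the appropriate side by suitable $\overline{\Theta}(G_{k,j})$ produces the rectangular units $E_{i,p+j}$ and $E_{p+j,i}$, and products of the latter produce the diagonal units $E_{i,i}$ and $E_{p+j,p+j}$. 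Hence the image of $\overline{\Theta}$ is a subalgebra containing every matrix unit, so $\overline{\Theta}$ is onto and $\dim_{\field}\mathcal{U}(\mathcal{J}_{\field})\ge (p+q)^{2}$.

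It remains to prove $\dim_{\field}\mathcal{U}(\mathcal{J}_{\field})\le (p+q)^{2}$, which together with the previous step forces $\overline{\Theta}$ to be an isomorphism. (Finite–dimensionality of $\mathcal{U}(\mathcal{J}_{\field})$ is already known, so only the sharp bound is at issue.) Here I would use the identities of Lemma \ref{R} and Corollary \ref{R2} to rewrite an arbitrary monomial $G_{i_1,j_1}\cdots G_{i_n,j_n}$ as a linear combination of a fixed finite list of normal–form monomials: by \eqref{f2} any word having a consecutive pair that disagrees in both indices is zero; by \eqref{f3} and \eqref{f4} every maximal run of factors sharing a row (resp.\ a column) may be replaced by the corresponding run in row $1$ (resp.\ column $1$); by \eqref{f1} such runs can be shortened; by \eqref{f5} and \eqref{f6} the alternating triples $G_{1,j}G_{1,\ell}G_{1,s}$ and $G_{i,1}G_{k,1}G_{t,1}$ with distinct indices vanish; and \eqref{f7}, \eqref{f8}, together with the rewriting rules of Corollary \ref{R2}, normalize the remaining ``mixed'' words and cap their length. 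One then checks that the surviving normal forms split into the four expected families — represented by $G_{i,1}G_{k,1}$ with $i\neq k$, by $G_{1,j}G_{1,\ell}$ with $j\neq\ell$, by words that $\overline{\Theta}$ carries to $E_{i,p+j}$, by words carried to $E_{p+j,i}$, together with the diagonal ones — with total count $(p^{2}-p)+(q^{2}-q)+pq+pq+p+q=(p+q)^{2}$. Since these monomials span $\mathcal{U}(\mathcal{J}_{\field})$, the bound follows, and $\overline{\Theta}\colon \mathcal{U}(\mathcal{J}_{\field})\cong M_{p+q \times p+q}(\field)$.

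The main obstacle is this last step: organizing the monomial rewriting so that it provably terminates and verifying that no more than $(p+q)^{2}$ normal forms remain. The delicate point is the interplay between the ``push to row/column $1$'' rules \eqref{f3}, \eqref{f4} and the length–reducing rules for mixed words coming from \eqref{f7}, \eqref{f8} and Corollary \ref{R2}; a clean induction on word length, handling separately the words supported entirely in row $1$, those supported entirely in column $1$, and the genuinely mixed ones, is what keeps the enumeration of normal forms simultaneously exhaustive and non-redundant.
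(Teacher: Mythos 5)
Your first two paragraphs are sound: the block map $x\mapsto\left(\begin{smallmatrix}0&x\\x^{t}&0\end{smallmatrix}\right)$ is a Jordan triple homomorphism (it is precisely the representation $\Theta$ of Corollary \ref{xo}), the universal property gives an algebra map $\overline{\Theta}$ with $\overline{\Theta}(G_{i,j})=\mathbb{E}_{i,p+j}+\mathbb{E}_{p+j,i}$, and your computation $\overline{\Theta}(G_{i,j})\overline{\Theta}(G_{k,\ell})=\delta_{j,\ell}\mathbb{E}_{i,k}+\delta_{i,k}\mathbb{E}_{p+j,p+\ell}$ together with $p,q>1$ does yield surjectivity, hence $\dim\mathcal{U}(\mathcal{J}_{\field})\ge(p+q)^{2}$. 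The genuine gap is the reverse inequality. What you give for $\dim\mathcal{U}(\mathcal{J}_{\field})\le(p+q)^{2}$ is a plan rather than a proof: you assert that the rewriting rules extracted from Lemma \ref{R} and Corollary \ref{R2} terminate in at most $(p+q)^{2}$ normal forms, but you never exhibit the normal forms for the ``mixed'' classes (the elements that should go to $\mathbb{E}_{i,p+j}$, $\mathbb{E}_{p+j,i}$ and to the diagonal units are words of length $3$ and $4$, not $2$, so the reduction is not just ``push to row/column $1$ and shorten runs''), nor prove that an arbitrary word reduces to the list, nor that the list is exhaustive. Since the ideal $I$ is generated only by the cubic relations, this spanning statement is the entire content of the theorem, and as written it is missing.

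The paper closes exactly this gap by a different and cleaner device, which you could graft onto your argument: it defines explicit candidate matrix units $A_{i,k}$ $(i,k\in\Omega_1\cup\Omega_3)$ as specific words of length $2$, $3$ or $4$ in the $G$'s --- e.g.\ $A_{i,k}=G_{i,1}G_{k,1}$ for distinct $i,k\in\Omega_1$, $A_{i,k}=G_{i,1}G_{1,1}G_{1,k-p}$ for $i\in\Omega_1$, $k\in\Omega_3$, $A_{1,1}=G_{1,1}G_{1,1}G_{1,j}G_{1,j}$, and so on --- verifies from Lemma \ref{R} and Corollary \ref{R2} that $A_{i,k}A_{\ell,t}=\delta_{k,\ell}A_{i,t}$, and then observes that $G_{i,j}=A_{i,j+p}+A_{j+p,i}$. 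Consequently the span $\mathcal{S}$ of the $(p+q)^{2}$ elements $A_{i,k}$ is a subalgebra containing every generator, so $\mathcal{S}=\mathcal{U}(\mathcal{J}_{\field})$ and the upper bound follows with no termination or confluence analysis; these $A_{i,k}$ are exactly the explicit spanning set your sketch would have to produce. Conversely, your map $\overline{\Theta}$ (which sends $A_{i,k}$ to $\mathbb{E}_{i,k}$) supplies the one ingredient the paper's argument leaves tacit, namely that the $A_{i,k}$ are not all zero, so that $\mathcal{S}$ is genuinely $M_{p+q\times p+q}(\field)$ and not $0$. Either complete the normal-form induction in full, or replace your third paragraph by the matrix-unit computation; as it stands the proof is incomplete at its central step.
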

\begin{proof}
 We set
  \begin{align*}
 & {A}_{i, k} = G_{i,1} G_{k,1} \quad (i, k \in \Omega_1;\,\,  i\neq k).
  \\
&{A}_{1, p+1} = G_{1,j} G_{1,j} G_{1,1} \quad (\text{for any}\,\, j\in \Omega_2\setminus \{1\}).
 \\
& {A}_{i, k} = G_{i,1} G_{1,1} G_{1,k-p}\quad (i\in  \Omega_1,\,k \in \Omega_3;\,\, (i,k)\neq (1,p+1)).
\\
&{A}_{i, k} = - {A}_{k,i} + G_{k,i-p} \quad (i\in \Omega_3,\,\, k\in  \Omega_1).
\\
&{A}_{i, k} = G_{1,i-p} G_{1,k-p} \quad (i,k\in  \Omega_3;\,\, i\neq k).
\\
&A_{1,1} = G_{1,1} G_{1,1} G_{1,j} G_{1,j} \quad (\text{for any}\,\, j\in \Omega_2\setminus \{1\}).
\\
& A_{i,i} = - G_{1,1} G_{1,1} G_{t,1} G_{t,1}+ G_{i,1} G_{i,1}\quad (i \in \Omega_1\setminus \{1\};\,\text{for any}\,\, t\in \Omega_1\setminus \{1\} ).
\\
&A_{i,i}  = - G_{1,1} G_{1,1} G_{1,j} G_{1,j}+ G_{1,i-p} G_{1,i-p} \quad (i\in \Omega_3;\,\, \text{for any}\,\, j\in \Omega_2\setminus \{1\}).
\end{align*}
We wish to show that the elements $A_{i,j}$ (for all  $i,j \in \Omega_1 \cup \Omega_3$) satisfy the multiplication table for matrix units.  We first observe that the elements $A_{1,1}$, ${A}_{1, p+1}$, and the first term of $A_{i, i}$ $(i\in (\Omega_1 \cup \Omega_3)\setminus\{1\})$ do not depend on the choice of $j\neq 1$ (see Remark \ref{r}).  Let $i \in (\Omega_1 \cup \Omega_3)\setminus \{1\}$ and choose any $j \in \Omega_2 \setminus \{1\}$, we have
\begin{align*}
 A_{1,i} &= \Delta_{i,\Omega_1} G_{1,1} G_{i,1} + \delta_{i,p+1} G_{1,j} G_{1,j} G_{1,1} +\widehat{\delta}_{i,p+1} \Delta_{i,\Omega_3} G_{1,1} G_{1,1} G_{1,i-p}.
 \\
A_{i,1} &= \Delta_{i,\Omega_1} G_{i,1} G_{1,1}+ \Delta_{i,\Omega_3}(- {A}_{1,i} + G_{1,i-p}).
\end{align*}
For all $i \in (\Omega_1 \cup \Omega_3)\setminus \{1\}$, we first consider the following four products: $A_{1,i} A_{1,1}$,  $A_{1, 1} A_{1,i}$,  $A_{1,1} A_{i,1}$, and $A_{i,1} A_{1,1}$.
For $A_{1,i} A_{1,1}$: We have
\begin{align}\label{p1}
A_{1,i} A_{1,1} &=  \Delta_{i,\Omega_1} G_{1,1} G_{i,1}G_{1,1}G_{1,1} G_{1,j} G_{1,j} \\&\quad+ \delta_{i,p+1} G_{1,j} G_{1,j} G_{1,1}G_{1,1}G_{1,1} G_{1,j} G_{1,j}\notag \\&\quad +\widehat{\delta}_{i,p+1} \Delta_{i,\Omega_3}  G_{1,1} G_{1,1} G_{1,i-p}G_{1,1}G_{1,1} G_{1,j} G_{1,j}\notag
\\&
\equiv  0,\notag
\end{align}
since $G_{11} G_{11} G_{11} \equiv G_{11}$ (by \eqref{f1} of Lemma \ref{R}) and the elements, $G_{1,1} G_{i,1}G_{1,1}$ ($i\neq 1$),  $G_{1,j} G_{1,1} G_{1,j}$ ($j\neq 1)$ and $G_{1,1} G_{1,i-p}G_{1,1}$ ($i\neq p+1$) vanish (by \eqref{f6}, \eqref{f5} of Lemma \ref{R}),

For $ A_{1, 1} A_{1,i}$: We have
\begin{align}\label{z}
 A_{1, 1} A_{1,i} &= \Delta_{i,\Omega_1}   G_{1,1}G_{1,1} G_{1,j} G_{1,j} G_{1,1} G_{i,1} + \delta_{i,p+1} G_{1,1}G_{1,1} G_{1,j} G_{1,j} G_{1,j} G_{1,j}G_{1,1} \\
&\quad +\widehat{\delta}_{i,p+1} \Delta_{i,\Omega_3} G_{1,1}G_{1,1} G_{1,j} G_{1,j} G_{1,1} G_{1,1} G_{1,i-p}\notag
\\
&\equiv \Delta_{i,\Omega_1}  G_{1,1} G_{i,1}  + \delta_{i,p+1} G_{1,1}G_{1,1} G_{1,j} G_{1,j}G_{1,1}\notag
 \\
&\quad +\widehat{\delta}_{i,p+1} \Delta_{i,\Omega_3} G_{1,1}G_{1,1}  G_{1,1} G_{1,1} G_{1,i-p}\notag
\\
& \equiv \Delta_{i,\Omega_1}  G_{1,1} G_{i,1} + \delta_{i,p+1} G_{1,j} G_{1,j} G_{1,1}\notag
+\widehat{\delta}_{i,p+1} \Delta_{i,\Omega_3}  G_{1,1} G_{1,1} G_{1,i-p}
\\&
= A_{1,i},\notag
\end{align}
by \eqref{t5}, \eqref{t4}, \eqref{t77} of Corollary \ref{R2} and \eqref{f1} of Lemma \ref{R}.
For $A_{1,1} A_{i,1}$: We have
\begin{align*}
A_{1,1} A_{i,1}& = \Delta_{i,\Omega_1} A_{1,1} G_{i,1} G_{1,1}+ \Delta_{i,\Omega_3}(- A_{1,1} {A}_{1,i} + A_{1,1} G_{1,i-p}).
\end{align*}
We observe that $A_{1,1}G_{i,1} = G_{1,1}G_{1,1} G_{1,j} G_{1,j} G_{i,1}\equiv 0$; since  $G_{1,j} G_{i,1}\equiv 0$ (by \eqref{f2} of Lemma \ref{R}). Using this and \eqref{z} (of the present proof), \eqref{t4} and \eqref{t6} of Corollary \ref{R2}, we obtain
\begin{align*}
A_{1,1} A_{i,1}& =  \Delta_{i,\Omega_3}(- A_{1,i} + A_{1,1} G_{1,i-p})
\\
& =  \Delta_{i,\Omega_3}(- A_{1,i} + G_{1,1}G_{1,1} G_{1,j} G_{1,j} G_{1,i-p})
\\
& \equiv \Delta_{i,\Omega_3}(- A_{1,i} + \delta_{i,p+1} G_{1,j} G_{1,j} G_{1,1} + \widehat{\delta}_{i,p+1} G_{1,1}G_{1,1} G_{1,i-p})
\\
&\equiv 0.
\end{align*}
For $A_{i,1} A_{1,1}$: We have
\begin{align*}
A_{i,1} A_{1,1} & = \Delta_{i,\Omega_1} G_{i,1} G_{1,1} A_{1,1}  + \Delta_{i,\Omega_3}(- {A}_{1,i}A_{1,1}  +  G_{1,i-p}A_{1,1})
\\
& = \Delta_{i,\Omega_1} G_{i,1} G_{1,1}  G_{1,1} G_{1,1} G_{1,j} G_{1,j}  + \Delta_{i,\Omega_3} G_{1,i-p} G_{1,1} G_{1,1} G_{1,j} G_{1,j}
\\
&\equiv  \Delta_{i,\Omega_1} G_{i,1} G_{1,1} G_{1,j} G_{1,j}  + \Delta_{i,\Omega_3} \big[\delta_{i,p+1} G_{1,1} G_{1,1} G_{1,1} G_{1,j} G_{1,j}
\\&\quad+  \widehat{\delta}_{i,p+1} G_{1,i-p} G_{1,1} G_{1,1} G_{1,i-p} G_{1,i-p}\big]
\\
&\equiv \Delta_{i,\Omega_1} G_{i,1} G_{1,1}
 + \Delta_{i,\Omega_3} \big[\delta_{i,p+1} G_{1,1}G_{1,j} G_{1,j} \\&\quad + \widehat{\delta}_{i,p+1} (- G_{1,1} G_{1,1} G_{1,i-p}+ G_{1,i-p}) \big]
 \\
 & = A_{i,1},
\end{align*}
by \eqref{p1} (of the present proof),  \eqref{f1} of Lemma \ref{R}, Remark \ref{r}, and \eqref{t7}, \eqref{t1} of Corollary \ref{R2}.
Summarizing, for all $ i \in (\Omega_1\cup \Omega_3)\setminus \{1\}$, we have
\begin{align}\label{pr1}
 A_{1,i} A_{1,1} = 0 =  A_{1,1} A_{i,1},\,\,    A_{1,1} A_{1,i} =  A_{1,i},\,\,   A_{i,1} A_{1,1} = A_{i,1}.
\end{align}
Throughout the rest of the proof, we assume that $i,k \in (\Omega_1\cup \Omega_3)\setminus \{1\}$. Using the products of \eqref{pr1}, we get
\begin{equation}\label{ss1}
A_{1,i} A_{1,k} = A_{1,i} A_{1,1} A_{1,k} = 0,\quad  A_{i,1} A_{k,1} = A_{i,1} A_{1,1} A_{k,1} = 0.
\end{equation}
We next consider the two products: $A_{1,i} A_{k,1}$ and $A_{i,1} A_{1,k}$.

For $A_{1,i} A_{k,1}$: Using \eqref{ss1} (of the present proof), we get
\begin{align*}
A_{1,i} A_{k,1} &= \Delta_{k,\Omega_1} A_{1,i} G_{k,1} G_{1,1}+ \Delta_{k,\Omega_3} A_{1,i} G_{1,k-p}.
\\&= \Delta_{k,\Omega_1}\big( \Delta_{i,\Omega_1}  G_{1,1} G_{i,1}
+  \delta_{i,p+1} G_{1,j} G_{1,j} G_{1,1}
\\&\qquad +\widehat{\delta}_{i,p+1} \Delta_{i,\Omega_3} G_{1,1} G_{1,1} G_{1,i-p}\big)G_{k,1} G_{1,1}
\\&\qquad
+ \Delta_{k,\Omega_3} \big(\Delta_{i,\Omega_1} G_{1,1} G_{i,1}
 + \delta_{i,p+1}  G_{1,j} G_{1,j} G_{1,1} \\&\qquad +\widehat{\delta}_{i,p+1} \Delta_{i,\Omega_3}  G_{1,1} G_{1,1} G_{1,i-p} \big)G_{1,k-p} .
\end{align*}
By \eqref{f2}, \eqref{f5}, and \eqref{f6} of Lemma \ref{R}, the following products vanish: $G_{i,1}G_{k,1} G_{1,1}$ ($i\neq k$), $G_{1,1} G_{k,1} G_{1,1}$, $G_{1,i-p} G_{k,1}$ ($i\neq p+1$), $G_{i,1} G_{1,k-p}$ ($k\neq p+1$), $G_{1,j} G_{1,1}  G_{1,k-p}$ ($k\neq p+1$), $G_{1,1} G_{1,i-p}  G_{1, k-p}$($i\neq k, p+1$). It follows that
\begin{align*}
A_{1,i} A_{k,1} &= \Delta_{k,\Omega_1} \delta_{i,k} G_{1,1} G_{k,1}G_{k,1} G_{1,1}
+ \Delta_{k,\Omega_3} \big( \delta_{i,p+1}\delta_{k,p+1}  G_{1,j} G_{1,j} G_{1,1}  G_{1,1} \\&\quad + \Delta_{i,\Omega_3}\widehat{\delta}_{i,p+1}\delta_{i,k} G_{1,1} G_{1,1} G_{1, k-p}  G_{1, k-p}\big).\notag
\end{align*}
We now choose any  $\ell \in  \Omega_1\setminus\{1\}$ and $t\in \Omega_2\setminus\{1\}$. By \eqref{f7} and \eqref{f8} of Lemma \ref{R} and Remark \ref{r}, we get $G_{k,1}G_{k,1} G_{1,1}\equiv G_{1,1} G_{1,t} G_{1,t}$, $G_{1,j} G_{1,j} G_{1,1}  G_{1,1}\equiv  G_{1,1} G_{\ell,1} G_{\ell,1} G_{1,1}\equiv G_{1,1} G_{1,1} G_{1,t} G_{1,t}$ $(j\neq 1)$, $G_{1,1} G_{1, k-p}  G_{1, k-p}\equiv G_{1,1} G_{1, t}  G_{1, t}$ $(k-p\neq 1)$. Using this discussion in the last equation implies
\begin{align}\label{o2}
A_{1,i} A_{k,1} &\equiv \Delta_{k,\Omega_1}  \delta_{i,k} G_{1,1} G_{1,1} G_{1,t}G_{1,t}
+ \Delta_{k,\Omega_3} \big( \delta_{i,p+1}\delta_{k,p+1}  G_{1,1}  G_{1,1}G_{1,t}G_{1,t}  \\&\quad +\Delta_{i,\Omega_3}\widehat{\delta}_{i,p+1}\delta_{i,k}   G_{1,1} G_{1,1} G_{1, t}  G_{1, t}\big)\notag
\\& =  \delta_{ik} A_{1,1}. \notag
\end{align}
For the product $A_{i,1} A_{1,k}$: Using \eqref{ss1}(of the present proof) and \eqref{f1} of Lemma \ref{R}, we get
\begin{align*}
 A_{i,1} A_{1,k} &\equiv  \Delta_{i,\Omega_1}  G_{i,1} G_{1,1}A_{1,k}+ \Delta_{i,\Omega_3}(-  {A}_{1,i}A_{1,k} +  G_{1,i-p}A_{1,k})
 \\& \equiv \Delta_{i,\Omega_1} G_{i,1} G_{1,1} \big[\Delta_{k,\Omega_1}  G_{1,1} G_{k,1}  +  \delta_{k,p+1}  G_{1,t} G_{1,t} G_{1,1}
\\&\quad +  \Delta_{k,\Omega_3} \widehat{\delta}_{k,p+1}  G_{1,k-p}\big]
+ \Delta_{i,\Omega_3} G_{1,i-p}\big[\Delta_{k,\Omega_1}   G_{1,1} G_{k,1} \\ &\quad + \delta_{k,p+1}  G_{1,t} G_{1,t} G_{1,1}
 +  \widehat{\delta}_{k,p+1} \Delta_{k,\Omega_3}   G_{1,1} G_{1,1} G_{1,k-p}\big].
\end{align*}
Using \eqref{t2}, \eqref{t7}, \eqref{t3}, and \eqref{t1} of Corollary \ref{R2} implies
\begin{align*}
  & A_{i,1} A_{1,k}\equiv \Delta_{i,\Omega_1}\big[ \Delta_{k,\Omega_1} \left(- \delta_{i,k} G_{1,1} G_{1,1} G_{i,1} G_{i,1}+ G_{i,1}G_{k,1}\right) + \delta_{k,p+1} G_{i,1} G_{1,1} G_{1,1}
\\& \quad+ \Delta_{k,\Omega_3} \widehat{\delta}_{k,p+1} G_{i,1} G_{1,1}  G_{1,k-p}\big]
+ \Delta_{i,\Omega_3} \big[\Delta_{k,\Omega_1}  G_{1,i-p} G_{1,1} G_{k,1} \\&\quad+ \delta_{k,p+1}  \left(-\delta_{i,p+1} G_{1,1} G_{1,1} G_{1,t} G_{1,t}+ G_{1,i-p}G_{1,1}\right) +  \widehat{\delta}_{k,p+1} \Delta_{k,\Omega_3}  \big(\delta_{i,p+1}G_{1,1} G_{1,k-p}\\&\quad+ \widehat{\delta}_{i,p+1}   \left(-\delta_{i,k} G_{1,1} G_{1,1}G_{1,i-p} G_{1,i-p}+ G_{1,i-p} G_{1,k-p}\right)\big)\big]
\\
& \quad= A_{i,k}.
\end{align*}
Summarizing, for all $i,k \in (\Omega_1\cup \Omega_3)\setminus \{1\}$, we have
\begin{align}\label{pro2}
   A_{1,i} A_{1,k} = 0 = A_{i,1} A_{k, 1},\,\, A_{1,i} A_{k,1} = \delta_{ik} A_{1,1},\,\,  A_{i,1} A_{1,k} = A_{i,k}.
   \end{align}
We now use the products of  \eqref{pr1} and \eqref{pro2} to get all the others. For all $i,k, \ell , t\in (\Omega_1\cup \Omega_3)\setminus \{1\}$, we have
$A_{1,i}A_{i,1} = A_{1,1}$, hence  $A_{1,1} A_{1,i}A_{i,1} = A_{1,1} A_{1,1}$. Thus $A_{1,1} = A_{1,1}A_{1,1}$.  We also have $A_{i,k} = A_{i,1} A_{1,k}$. Hence $A_{i,k} A_{\ell, t} = A_{i,1} A_{1,k} A_{\ell, 1} A_{1, t}
= \delta_{k, \ell} A_{i,1} A_{1,1} A_{1,t}
 = \delta_{k,\ell} A_{i,1} A_{1,t}
  = \delta_{k,\ell} A_{i,t}$. Summarizing, for all $i,k, \ell , t\in \Omega_1\cup \Omega_3$, we have
  \[ A_{i,k} A_{\ell, t}  = \delta_{k,\ell} A_{i,t}.   \]
Now let $\mathcal{S}$ denote the subspace of  $\mathcal{U}(\mathcal{J}_{\field})$ generated by $A_{i,j}$ $(i,j \in \Omega_1\cup \Omega_3)$. We have shown that $\mathcal{S}$ is a subalgebra of $\mathcal{U}(\mathcal{J}_{\field})$ and is isomorphic to $M_{p+q \times p+q}(\field)$. By the definition of $A_{i,j}$, we get
\[ G_{i,j} = A_{i,j + p} + A_{j + p,i}\quad  (\text{for all}\,\,  i \in \Omega_1, \,\, j\in \Omega_2).\]
Thus all $G_{i,j}\in \mathcal{S}$. Hence $\mathcal{U}(\mathcal{J}_{\field}) \cong M_{p+q \times p+q}(\field)$.
\end{proof}

\begin{corollary}
The universal associative envelope of the Jordan triple system $\mathcal{J}_{\field}$ (of Definition \ref{def}) is semisimple.
\end{corollary}

\begin{corollary}\label{xo}
 The Jordan triple system $\mathcal{J}_{\field}$ (of Definition \ref{def})  has only one nontrivial representation (up to equivalence) defined by:
\[\Theta: \mathcal{J}_{\field} \to M_{p+q \times p+q}(\field), \quad E_{i,j} \to \left(
                                                               \begin{array}{cc}
                                                                O_{p\times p} & E_{i,j}  \\
                                                                 E_{j,i} & O_{q\times q}  \\
                                                               \end{array}
                                                             \right).\]
  \end{corollary}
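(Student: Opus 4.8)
The plan is to deduce Corollary \ref{xo} directly from Theorem \ref{S} together with the standard representation theory of finite‑dimensional semisimple associative algebras. Since $\mathcal{J}_{\field}$ is special, every representation of $\mathcal{J}_{\field}$ factors through $\iota: \mathcal{J}_{\field} \to \mathcal{U}(\mathcal{J}_{\field})$, so the (nontrivial) irreducible representations of $\mathcal{J}_{\field}$ are in bijection with the (nontrivial) irreducible representations of $\mathcal{U}(\mathcal{J}_{\field})$. By Theorem \ref{S}, $\mathcal{U}(\mathcal{J}_{\field}) \cong M_{p+q\times p+q}(\field)$, which is simple, hence has exactly one irreducible module up to isomorphism, namely the natural column module $\field^{p+q}$; pulling back along $\iota$ gives the unique nontrivial irreducible representation of $\mathcal{J}_{\field}$.

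The remaining work is to write this unique representation explicitly, i.e. to compute the composite $\mathcal{J}_{\field} \xrightarrow{\iota} \mathcal{U}(\mathcal{J}_{\field}) \xrightarrow{\sim} M_{p+q\times p+q}(\field)$ on the basis $\mathfrak{B}$. Here I would use the identification built in the proof of Theorem \ref{S}: the isomorphism sends the matrix unit $A_{i,k}$ to the $(i,k)$ matrix unit of $M_{p+q\times p+q}(\field)$ (under the indexing of rows/columns by $\Omega_1\cup\Omega_3$), and from the last displayed line of that proof we have $G_{i,j} = A_{i,j+p} + A_{j+p,i}$ for $i\in\Omega_1$, $j\in\Omega_2$. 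Translating, $\iota(E_{i,j})$ maps to the $(p+q)\times(p+q)$ matrix with a $1$ in position $(i, p+j)$ and a $1$ in position $(p+j, i)$ and zeros elsewhere; in block form with blocks of sizes $p$ and $q$, this is precisely $\left(\begin{smallmatrix} O_{p\times p} & E_{i,j} \\ E_{j,i} & O_{q\times q}\end{smallmatrix}\right)$, which is the formula claimed for $\Theta$. One should also check $\Theta$ is nontrivial (it is clearly injective, and in particular nonzero) and is a Jordan triple homomorphism into $(M_{p+q\times p+q}(\field))_{-}$; the latter is automatic since $\Theta$ equals (the isomorphism composed with) $\iota$, which is a homomorphism of $\mathcal{J}_{\field}$ into the Jordan triple system associated to $\mathcal{U}(\mathcal{J}_{\field})$ by construction. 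Alternatively, and more transparently, one verifies directly that the block matrices $\left(\begin{smallmatrix} O & E_{i,j} \\ E_{j,i} & O\end{smallmatrix}\right)$ satisfy $XY^tZ + ZY^tX$ correctly — but note that on $M_{p+q\times p+q}(\field)$ the relevant product for a special Jordan triple system is $XYZ+ZYX$, not the transpose product, so one must observe that for these particular antidiagonal‑block matrices $Y^t$ coincides with $Y$, so the two products agree; this is why the embedding lands where it does.

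The only genuinely delicate point is the bookkeeping of indices: making sure that "row $p+j$, column $i$" in $M_{p+q\times p+q}(\field)$ corresponds, in the $p|q$ block decomposition, to the $(j,i)$ entry of the lower‑left $q\times p$ block, i.e. to $E_{j,i}$ in the stated block matrix. This is routine but is the step where a sign or transpose error would most easily creep in, so I would state the indexing convention ($\Omega_1$ labels the first $p$ coordinates, $\Omega_3 = \{p+1,\dots,p+q\}$ the last $q$, with $k\in\Omega_3$ corresponding to the $(k-p)$‑th coordinate of the second block) explicitly before reading off $\Theta$. Everything else — semisimplicity, uniqueness of the irreducible, and that it has dimension $p+q$ — is immediate from Theorem \ref{S} and Wedderburn theory, so I expect no real obstacle beyond this transcription.
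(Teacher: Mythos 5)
Your proposal is correct, and its first half (uniqueness of the nontrivial irreducible via Theorem \ref{S}, simplicity of $M_{p+q\times p+q}(\field)$, and the universal property of $\mathcal{U}(\mathcal{J}_{\field})$) is exactly the paper's argument. Where you differ is in how the explicit formula for $\Theta$ is justified: you derive it by composing $\iota$ with the isomorphism of Theorem \ref{S}, reading off $\iota(E_{i,j}) = G_{i,j} = A_{i,j+p} + A_{j+p,i} \mapsto \mathbb{E}_{i,p+j} + \mathbb{E}_{p+j,i}$, so that $\Theta$ is automatically a Jordan triple homomorphism because $\iota$ is. The paper instead takes your ``alternative'' route: it verifies directly, by expanding $\Theta\{E_{i,j},E_{k,\ell},E_{s,t}\}$ and $\{\Theta(E_{i,j}),\Theta(E_{k,\ell}),\Theta(E_{s,t})\}$ in terms of the matrix units $\mathbb{E}_{i,k}$, that the two sides agree --- including precisely the observation you flag, that the images $\mathbb{E}_{i,p+j}+\mathbb{E}_{p+j,i}$ are symmetric matrices so the transpose product and the plain product $XYZ+ZYX$ coincide. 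Your route is shorter and makes the origin of the block formula transparent, but it leans entirely on the isomorphism of Theorem \ref{S} being realized by $A_{i,k}\mapsto \mathbb{E}_{i,k}$; the paper's direct verification has the independent benefit of exhibiting, without reference to the internal bookkeeping of that proof, a concrete nonzero $(p+q)$-dimensional representation of $\mathcal{J}_{\field}$. Either argument is acceptable, and your index bookkeeping (row $p+j$, column $i$ landing in the $(j,i)$ entry of the lower-left $q\times p$ block) matches the paper's.
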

\begin{proof}
By Theorem \ref{S}, the Jordan triple system $\mathcal{J}_{\field}$ has only one nontrivial representation. We now verify that $\Theta$ is a  representation of $\mathcal{J}_{\field}$. We first observe that,
\[\Theta(E_{i,j})= \mathbb{E}_{i, p + j} + \mathbb{E}_{p + j, i}\quad (\text{for all}\,\, i\in \Omega_1,\, j\in \Omega_2).\]
where $\mathbb{E}_{i, j}$  is the $(p+q) \times (p+q)$ matrices whose $(i,j)$ entry is $1$ and all the other entries are 0.
For all $i, k, s\in \Omega_1$ and $j,\ell, t \in \Omega_2$, we have
 \begin{align*}
\Theta\{E_{i,j}, E_{k,\ell}, E_{s,t}\} & = \Theta\left(E_{i,j} E_{\ell,k} E_{s,t}+ E_{s,t} E_{\ell,k} E_{i,j}\right)
 \\& = \Theta\left(\delta_{j,\ell}\delta_{k,s} E_{i,t}+ \delta_{t,\ell} \delta_{k,i} E_{s,j}\right)
 \\
 &= \delta_{j,\ell}\delta_{k,s} \left(\mathbb{E}_{i,p+t} + \mathbb{E}_{p+t,i}\right) + \delta_{t,\ell} \delta_{k,i} \left(\mathbb{E}_{s, p+j}+ \mathbb{E}_{p+j,s}\right).
 \end{align*}
 On the other hand
 \begin{align*}
 \{\Theta (E_{i,j}), \Theta (E_{k,\ell}), \Theta (E_{s,t})\}& =  \Theta (E_{i,j})(\Theta (E_{k,\ell}))^t\Theta (E_{s,t})+\Theta (E_{s,t})(\Theta (E_{k,\ell}))^t\Theta (E_{i,j})
\\
& = \left(\mathbb{E}_{i,p+j} + \mathbb{E}_{p+j,i}\right)\left(\mathbb{E}_{k,p+\ell}+ \mathbb{E}_{p+\ell,k}\right)^t \left(\mathbb{E}_{s,p+t}+ \mathbb{E}_{p+t,s}\right) \\&\quad+ \left(\mathbb{E}_{s,p+t} + \mathbb{E}_{p+t,s}\right) \left(\mathbb{E}_{k, p+\ell} + \mathbb{E}_{p+\ell,k})^t (\mathbb{E}_{i,p+j}+ \mathbb{E}_{p+j,i}\right)
\\
&= \left(\mathbb{E}_{i,p+j} + \mathbb{E}_{p+j,i}\right)\left(\mathbb{E}_{k,p+\ell}+ \mathbb{E}_{p+\ell,k}\right) \left(\mathbb{E}_{s,p+t}+ \mathbb{E}_{p+t,s}\right) \\&\quad+ \left(\mathbb{E}_{s,p+t} + \mathbb{E}_{p+t,s}\right) \left(\mathbb{E}_{k, p+\ell} + \mathbb{E}_{p+\ell,k}\right) \left(\mathbb{E}_{i,p+j}+ \mathbb{E}_{p+j,i}\right)
\\
& = \delta_{j,\ell}\delta_{k,s} \mathbb{E}_{i,p+t}\!\!+\!\! \delta_{i,k} \delta_{\ell,t} \mathbb{E}_{p+j,s}\!\!+\!\! \delta_{t,\ell} \delta_{i,k} \mathbb{E}_{s,p+j}\!\! +\!\! \delta_{s,k} \delta_{\ell,j}\mathbb{E}_{p+t,i}.
  \end{align*}
Hence $\Theta$ is a representation of $\mathcal{J}_{\field}$.
\end{proof}

\begin{lemma}\label{center}
The center $\mathfrak{C}(\mathcal{U}(\mathcal{J}_{\field}))$ of the universal associative envelope $\mathcal{U}(\mathcal{J}_{\field})$ has dimension $1$ with a basis
\[ e = (1-q)\,G_{1,1} G_{1,j}G_{1,j} \! +\! (1-p)\, G_{1,1}G_{1,1} G_{t,1}G_{t,1} \!+\! \sum^p_{i=2}\! G_{i,1} G_{i,1} \!+\! \sum^q_{s = 1}\! G_{1,s} G_{1,s},\]
for any $t \in \Omega_1 \setminus\{1\} $ and $j \in \Omega_2\setminus\{1\}$.
\end{lemma}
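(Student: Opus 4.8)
The plan is to derive the statement directly from Theorem~\ref{S}. Since $\mathcal{U}(\mathcal{J}_{\field})\cong M_{p+q\times p+q}(\field)$ and a full matrix algebra over a field is central simple, its center is exactly the line of scalar multiples of the identity matrix; so $\dim_{\field}\mathfrak{C}(\mathcal{U}(\mathcal{J}_{\field}))=1$ is immediate, and the only remaining task is to exhibit one nonzero central element. I would take this element to be the identity of $\mathcal{U}(\mathcal{J}_{\field})$ itself, which under the isomorphism constructed in the proof of Theorem~\ref{S} is the preimage of the identity matrix, namely the sum $\sum_{i\in\Omega_1\cup\Omega_3}A_{i,i}$ of the diagonal matrix units $A_{i,i}$ defined there. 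Thus the whole problem reduces to verifying the algebraic identity $e\equiv\sum_{i\in\Omega_1\cup\Omega_3}A_{i,i}$ in $\mathcal{U}(\mathcal{J}_{\field})$.

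To check that identity I would split the sum into the three blocks $A_{1,1}$, $\sum_{i\in\Omega_1\setminus\{1\}}A_{i,i}$, and $\sum_{i\in\Omega_3}A_{i,i}$, and substitute the definitions from the proof of Theorem~\ref{S}. Using $A_{1,1}=G_{1,1}G_{1,1}G_{1,j}G_{1,j}$, together with the fact (Remark~\ref{r}) that the ``correction'' summand of each $A_{i,i}$ with $i\neq 1$ is independent of the auxiliary index, one gets
\[
\sum_{i\in\Omega_1\setminus\{1\}}A_{i,i}=-(p-1)\,G_{1,1}G_{1,1}G_{t,1}G_{t,1}+\sum_{i=2}^{p}G_{i,1}G_{i,1},
\]
and, after re-indexing $\Omega_3$ by $\Omega_2$ through $i\mapsto i-p$,
\[
\sum_{i\in\Omega_3}A_{i,i}=-q\,G_{1,1}G_{1,1}G_{1,j}G_{1,j}+\sum_{s=1}^{q}G_{1,s}G_{1,s}.
\]
Adding the three blocks, the terms $G_{1,1}G_{1,1}G_{1,j}G_{1,j}$ collect with coefficient $1-q$, the terms $G_{1,1}G_{1,1}G_{t,1}G_{t,1}$ with coefficient $-(p-1)=1-p$, and the remaining terms are $\sum_{i=2}^{p}G_{i,1}G_{i,1}+\sum_{s=1}^{q}G_{1,s}G_{1,s}$; this is exactly $e$. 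Hence $e=\sum_{i\in\Omega_1\cup\Omega_3}A_{i,i}$ is the (nonzero) identity of $\mathcal{U}(\mathcal{J}_{\field})$, so it is central and $\{e\}$ is a basis of $\mathfrak{C}(\mathcal{U}(\mathcal{J}_{\field}))$.

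I expect no genuine obstacle here: the one thing to be careful about is the bookkeeping, i.e.\ counting correctly how many copies of the shared correction terms appear (one per element of $\Omega_1\setminus\{1\}$, resp.\ of $\Omega_3$) and invoking Remark~\ref{r} so that $A_{1,1}$, these correction terms, and hence $e$ are well defined independently of the choices of $j$ and $t$. As an alternative not relying on the explicit matrix units of Theorem~\ref{S}, one could instead verify by hand that $e\,G_{i,j}\equiv G_{i,j}\,e$ for every $i\in\Omega_1$, $j\in\Omega_2$ using Lemma~\ref{R} and Corollary~\ref{R2}, but that route is considerably longer and the identification above makes it unnecessary.
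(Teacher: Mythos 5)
Your proposal is correct and follows essentially the same route as the paper: both identify the center of $M_{p+q\times p+q}(\field)$ as the scalars, take $e=\sum_{i\in\Omega_1\cup\Omega_3}A_{i,i}$ (the identity) as the spanning element, and expand the diagonal matrix units from the proof of Theorem~\ref{S} to collect the coefficients $1-q$ and $1-p$. (Your computation, like the paper's, actually yields $(1-q)\,G_{1,1}G_{1,1}G_{1,j}G_{1,j}$ for the first term; the three-factor $G_{1,1}G_{1,j}G_{1,j}$ in the lemma's displayed formula appears to be a typo in the statement, not a defect of your argument.)
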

\begin{proof}
By Theorem \ref{S}, we have $\mathfrak{C}(\mathcal{U}(\mathcal{J}_{\field})) \cong \field$. It follows that $e= \sum^{p+q}_{i=1} A_{i,i}$ is the only idempotent in $\mathcal{U}(\mathcal{J}_{\field})$ that span the center. Using the proof of Theorem \ref{S}, we get
\begin{align*}
e &= G_{1,1} G_{1,1} G_{1,j} G_{1,j} + \sum^p_{i=2} \left(- G_{11} G_{1,1} G_{t,1} G_{t,1}+ G_{i,1} G_{i,1}\right) \\&\quad + \sum^{p+q}_{i= p+1} \left(- G_{1,1}G_{1,1} G_{1,j} G_{1,j} + G_{1,i-p} G_{1,i-p}\right)
\\
& =  G_{1,1} G_{1,1} G_{1,j} G_{1,j} -(p-1)\,  G_{11} G_{1,1} G_{t,1} G_{t,1}+ \sum^p_{i=2}  G_{i,1} G_{i,1} \\&\quad -q\, G_{1,1}G_{1,1} G_{1,j} G_{1,j} + \sum^{p+q}_{i= p+1} G_{1,i-p} G_{1,i-p}.
\end{align*}
This completes the proof.
\end{proof}
\section*{Acknowledgments}
The author thanks the anonymous referee for helpful comments.

\end{document}